\begin{document}
\title{{\bf Geometrically simple quasi-abelian varieties}}
\author{Yukitaka Abe}
\date{ }
\maketitle

\noindent
{\bf Abstract}\\
We define the geometric simpleness for toroidal groups.
We give an example of quasi-abelian variety which is geometrically
simple, but not simple.
We show that any quasi-abelian variety is isogenous to a product
of geometrically simple quasi-abelian varieties.
We also show that the ${\mathbb Q}$-extension of the ring of all
endomorphisms of a geometrically simple quasi-abelian variety is a
division algebra over ${\mathbb Q}$.

\footnote{
{\bf Mathematics Subject Classification (2010):}
32M05 (primary), 14K12 (secondary)}\ 
\footnote{
{\bf keywords:} Geometrically simple quasi-abelian varieties,
Isogeny}\ 
\footnote{
{\bf Running head:} Simple quasi-abelian varieties}\ 

\newtheorem{definition}{Definition}

\newtheorem{lemma}[definition]{Lemma}

\newtheorem{theorem}[definition]{Theorem}

\newtheorem{proposition}[definition]{Proposition}

\newtheorem{corollary}[definition]{Corollary}

\section{Introduction}
Let ${\rm End}_{{\mathbb Q}}(X)$ be the ${\mathbb Q}$-extension
of the ring of all endomorphisms of a toroidal group $X$.
To study ${\rm End}_{{\mathbb Q}}(X)$, we defined the
simpleness of toroidal groups as follows (Definition 2.1 in
\cite{abe}):\\
A toroidal group is simple if it does not contain a toroidal
subgroup apart from itself and zero.\\
We note that toroidal subgroups are not always closed in general.
It immediately follows from this definition that if $X$ is a
simple toroidal group, then ${\rm End}_{{\mathbb Q}}(X)$ is a
division algebra over ${\mathbb Q}$ (Lemma 3.3 in \cite{abe}).\par
Any abelian variety ${\mathbb A}$ is isogenous to a product
of simple abelian varieties.
Unfortunately, it does not hold that any quasi-abelian variety is
isogenous to a product of simple quasi-abelian varieties.
We think that the above definition of simpleness is too
strong. We define the geometric simpleness for toroidal
groups (Definition 1). We show that ${\rm End}_{{\mathbb Q}}(X)$ is a
division algebra over ${\mathbb Q}$ if $X$ is a geometrically simple
quasi-abelian varietyany, and that any quasi-abelian variety is
isogenous to a product of geometrically simple quasi-abelian
varieties.

\section{Geometric simpleness}
We consider a toroidal group $X = {\mathbb C}^n/\Gamma $ with
${\rm rank}\; \Gamma = n+m$. Let $\pi : {\mathbb C}^n 
\longrightarrow X$ be the projection. If $Y$ is a connected complex
Lie subgroup of $X$, then $\pi ^{-1}(Y) = E + \Gamma $,
where $E$ is a complex linear subspace of ${\mathbb C}^n$.
In this case, $Y$ is closed if and only if $E + \Gamma $ is
closed in ${\mathbb C}^n$.

\begin{definition}
A toroidal group is said to be geometrically simple if it
does not contain a closed toroidal subgroup apart from itself
and zero.
\end{definition}

\begin{lemma}
Let ${\mathbb A} = {\mathbb C}^2/\Lambda $ be a 2-dimensional
complex torus with the following period matrix
\begin{equation*}
P =
\begin{pmatrix}
1 & 0 & \sqrt{-1}r^3 & r\\
0 & 1 & r & \sqrt{-1}\\
\end{pmatrix}
,
\end{equation*}
where $r$ is a positive number such that 
$1, r, r^2, r^3 $ are linearly independent over ${\mathbb Q}$.
Then ${\mathbb A}$ is a simple abelian variety.
\end{lemma}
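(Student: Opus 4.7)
The plan is in two parts: verify that $\mathbb{A}$ is an abelian variety, then rule out every $1$-dimensional abelian subvariety using the $\mathbb{Q}$-linear independence of $1, r, r^2, r^3$. For the first part, the period matrix has the standard form $P = (I_2 \mid Z)$ with $Z = \begin{pmatrix} \sqrt{-1}r^3 & r \\ r & \sqrt{-1}\end{pmatrix}$; since $Z$ is symmetric and $\mathrm{Im}\, Z = \mathrm{diag}(r^3, 1)$ is positive definite (as $r > 0$), Riemann's bilinear relations hold and $\mathbb{A}$ is a principally polarized abelian variety.

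For simpleness, I assume for contradiction that $\mathbb{A}$ has a nontrivial abelian subvariety. Necessarily of dimension one, it corresponds to a complex line $E \subset \mathbb{C}^2$ with $\Lambda_E := E \cap \Lambda$ of rank $2$. The axis lines $\mathbb{C} \times \{0\}$ and $\{0\} \times \mathbb{C}$ are eliminated quickly, since the irrationality of $r$ forces $\Lambda_E$ to collapse to $\mathbb{Z}\lambda_1$ or $\mathbb{Z}\lambda_2$, of rank $1$. So I may take $E = \{(w, \tau w) : w \in \mathbb{C}\}$ with $\tau \in \mathbb{C}^*$ and pick two $\mathbb{Z}$-linearly independent lattice vectors $v_j = a_j\lambda_1 + b_j\lambda_2 + c_j\lambda_3 + d_j\lambda_4 \in E$, $j = 1, 2$. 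The proportionality of coordinates, $v_1^{(1)} v_2^{(2)} = v_2^{(1)} v_1^{(2)}$, when expanded and with real and imaginary parts separated, will become two polynomial identities in $r$ whose coefficients are integer combinations of the $2\times 2$ minors of $M = \begin{pmatrix} a_1 & b_1 & c_1 & d_1 \\ a_2 & b_2 & c_2 & d_2 \end{pmatrix}$. The $r^4$ contribution cancels automatically, so the $\mathbb{Q}$-independence of $1, r, r^2, r^3$ suffices and should yield the five relations $M_{ab} = M_{ad} = M_{bc} = M_{cd} = 0$ and $M_{ac} = M_{bd}$.

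The conclusion then splits into two cases. If $M_{ac} = M_{bd} = 0$ too, all six Plücker coordinates of $M$ vanish, so $M$ has rank at most $1$ and $v_1, v_2$ are $\mathbb{R}$-proportional, contradicting their choice. If instead $M_{ac} = M_{bd} \neq 0$, the four vanishing minors force the columns $a$, $b$, $d$ of $M$ to be pairwise $\mathbb{Z}$-proportional (with $a$ necessarily nonzero, since $M_{ac} \neq 0$); then $M_{bc} = 0$ forces $c$ to be proportional to $a$ as well, giving $M_{ac} = 0$, again a contradiction. The main obstacle I anticipate is the bookkeeping in the previous paragraph: expanding the complex identity in the integer coefficients and $r$, and correctly matching the coefficients of $1, r, r^2, r^3$ with the five minor equations above. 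Once that set-up is in place, the linear-algebra finish is immediate.
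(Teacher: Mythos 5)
Your proof is correct, and its core coincides with the paper's: both arguments come down to the vanishing of $v_1^{(1)}v_2^{(2)}-v_1^{(2)}v_2^{(1)}$ for two lattice vectors on a common complex line, expanded in powers of $r$, with the $\mathbb{Q}$-independence of $1,r,r^2,r^3$ (the $r^4$ terms indeed cancel identically) yielding exactly the five relations $M_{12}=M_{14}=M_{23}=M_{34}=0$ and $M_{13}=M_{24}$ --- this is the paper's system (2.3). The endgames genuinely differ. The paper fixes a norm-minimal lattice vector $\lambda^{(0)}$ on the line, runs a case analysis on whether $a_3^{(0)},a_4^{(0)}$ vanish to upgrade (2.3) to the vanishing of all six $2\times 2$ minors, and then needs a gcd/prime-factor argument to conclude $L\cap\Lambda=\mathbb{Z}\lambda^{(0)}$. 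You instead assume two $\mathbb{Z}$-independent lattice vectors and extract a contradiction directly from the Pl\"ucker data, which dispenses with both the minimality device and the number-theoretic step; this is shorter and arguably cleaner, at the cost of proving only that ${\rm rank}(L\cap\Lambda)\leq 1$ rather than identifying $L\cap\Lambda$ explicitly (which is all the lemma needs). One small repair in your second case: if the column $b$ is zero, the step ``$M_{bc}=0$ forces $c$ proportional to $a$'' gives nothing; but you do not need it, since $b,d\in\mathbb{R}a$ already forces $M_{bd}=0=M_{ac}$, the desired contradiction. You also verify the Riemann relations for the polarization explicitly, which the paper dismisses as obvious.
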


\begin{proof}
It is obvious that ${\mathbb A}$ is an abelian variety. We show
that it is simple.\par
Any $\lambda \in \Lambda $ has the unique representation
\begin{equation}
\lambda = 
\begin{pmatrix}
a_1 + \sqrt{-1}a_3 r^3 + a_4 r \\
a_2 + a_3 r + \sqrt{-1}a_4 \\
\end{pmatrix},\quad
a_1, a_2, a_3, a_4 \in {\mathbb Z}.
\end{equation}
Let $L$ be a complex line with
$L \cap \Lambda \not= \{ 0 \}$. Take
$\lambda ^{(0)} \in L \cap \Lambda $ such that
\begin{equation*}
\| \lambda ^{(0)}\| = \min \{ \| \lambda \| ;
\lambda \in L \cap \Lambda , \lambda \not= 0 \}.
\end{equation*}
It is written as
\begin{equation*}
\lambda ^{(0)}= 
\begin{pmatrix}
a_1^{(0)} + \sqrt{-1}a_3^{(0)} r^3 + a_4^{(0)} r \\
a_2^{(0)} + a_3^{(0)} r + \sqrt{-1}a_4^{(0)} \\
\end{pmatrix},\quad
a_1^{(0)}, a_2^{(0)}, a_3^{(0)}, a_4^{(0)} \in {\mathbb Z}.
\end{equation*}
We note that $(a_1^{(0)}, a_2^{(0)}, a_3^{(0)}, a_4^{(0)})=1$
and $L = {\mathbb C}\lambda ^{(0)}$.
Let $\lambda \in L \cap \Lambda $ with $ \lambda \not= 0$. Then
there exists $\zeta \in {\mathbb C}^{*}$ such that
$\lambda = \zeta \lambda ^{(0)}$. If we represent $\lambda $ as (2.1),
then we have
\begin{equation*}
\left\{ 
\begin{aligned}
a_1 + \sqrt{-1}a_3 r^3 + a_4 r & = \zeta (
a_1^{(0)} + \sqrt{-1}a_3^{(0)} r^3 + a_4^{(0)} r),\\
a_2 + a_3 r + \sqrt{-1}a_4 & = \zeta (
a_2^{(0)} + a_3^{(0)} r + \sqrt{-1}a_4^{(0)}).
\end{aligned}
\right.
\end{equation*}
Therefore we obtain
\begin{equation}
\begin{aligned}
(a_1 + \sqrt{-1}a_3 r^3 + a_4 r)(
a_2^{(0)} + a_3^{(0)} r + \sqrt{-1} a_4^{(0)})\quad \quad \quad \\
= (a_1^{(0)} + \sqrt{-1}a_3^{(0)} r^3 + a_4^{(0)} r)(
a_2 + a_3 r + \sqrt{-1} a_4).
\end{aligned}
\end{equation}
Since $1,r,r^2,r^3$ are linearly independent over ${\mathbb Q}$,
it follows from (2.2) that
\begin{equation}
\left\{
\begin{aligned}
a_1a_2^{(0)} &= a_1^{(0)}a_2,\\
a_1a_3^{(0)} + a_4a_2^{(0)} &= a_1^{(0)}a_3 +
a_4^{(0)}a_2,\\
a_4a_3^{(0)} &= a_4^{(0)}a_3,\\
a_1a_4^{(0)} &= a_1^{(0)}a_4,\\
a_3a_2^{(0)} &= a_3^{(0)}a_2.\\
\end{aligned}
\right.
\end{equation}
\par
We first consider the case $a_3^{(0)}a_4^{(0)} \not= 0$.
By the third equality of (2.3), if $a_4 = 0$, then $a_3 = 0$.
In this case we have $a_2 = 0$ by the fifth equality of
(2.3). Furthermore, $a_1 = 0$ by the second equality of
(2.3). Then $\lambda = 0$, which is a contradiction.
Hence we have $a_4 \not= 0$. Multiplying the third
equality of (2.3) by $a_1$ and using the fourth equality
of (2.3), we obtain
\begin{equation*}
a_1a_4a_3^{(0)} = a_1a_4^{(0)}a_3 = a_1^{(0)}a_4a_3.
\end{equation*}
Since $a_4 \not= 0$, we have $a_1a_3^{(0)} = a_1^{(0)}a_3$.
From the second equality of (2.3) it follows that
$a_4a_2^{(0)} = a_4^{(0)}a_2$. Then we obtain the
following equalities
\begin{equation}
\left\{
\begin{aligned}
a_1a_2^{(0)} &= a_1^{(0)}a_2,\\
a_1a_3^{(0)} &= a_1^{(0)}a_3,\\
a_1a_4^{(0)} &= a_1^{(0)}a_4,\\
a_2a_3^{(0)} &= a_2^{(0)}a_3,\\
a_2a_4^{(0)} &= a_2^{(0)}a_4,\\
a_3a_4^{(0)} &= a_3^{(0)}a_4.
\end{aligned}
\right.
\end{equation}
Let $p$ be a prime factor of $a_1^{(0)}$. Since
$(a_1^{(0)}, a_2^{(0)}, a_3^{(0)}, a_4^{(0)}) = 1$, there
exists $i$ such that $p$ does not divide $a_i^{(0)}$.
Now we consider integral solutions $(a_1, a_i)$ of
$a_1a_i^{(0)} = a_1^{(0)}a_i$. Let
$d_{1i} := (a_1^{(0)}, a_i^{(0)})$. Then we have
$a_1^{(0)} = b_1^{(0)}d_{1i}$ and $a_i^{(0)} = b_i^{(0)}d_{1i}$,
where $(b_1^{(0)}, b_i^{(0)}) = 1$.
The integral solutions of the above equation are written as
\begin{equation*}
a_1 = b_1^{(0)}k_{1i},\quad
a_i = b_i^{(0)}k_{1i}\quad
(k_{1i} \in {\mathbb Z}).
\end{equation*}
Since $p \nmid a_i^{(0)}$, we have $p \nmid d_{1i}$.
Then $p | b_1^{(0)}$. Therefore $p | a_1$. Hence we have
$a_1 = a_1^{(0)} k_1$, $k_1 \in {\mathbb Z}$, for $p$ is an
arbitrary prime factor of $a_1^{(0)}$. Similarly, we obtain
\begin{equation*}
a_2 = a_2^{(0)} k_2,\quad a_3 = a_3^{(0)} k_3,\quad
a_4 = a_4^{(0)} k_4,
\end{equation*}
where $k_2, k_3, k_4 \in {\mathbb Z}$. By (2.4) we obtain
$k_1 = k_2 = k_3 = k_4$. Thus we have $L \cap \Lambda = {\mathbb Z}\lambda ^{(0)}$.
\par
We next consider the case that $a_3^{(0)} \not= 0$ and
$a_4^{(0)} = 0$. From the third equality of (2.3) it follows that
$a_4 = 0$. In this case the equalities (2.3) become
\begin{equation*}
\left\{
\begin{aligned}
a_1a_2^{(0)} &= a_1^{(0)}a_2,\\
a_1a_3^{(0)} &= a_1^{(0)}a_3,\\
a_2a_3^{(0)} &= a_2^{(0)}a_3.\\
\end{aligned}
\right.
\end{equation*}
Therefore, we obtain the same conclusion
$L \cap \Lambda = {\mathbb Z}\lambda ^{(0)}$.\par
We also obtain $L \cap \Lambda = {\mathbb Z}\lambda ^{(0)}$ in other cases
by the same argument. Then ${\mathbb A}$ does not contain a
1-dimensional complex torus.
\end{proof}

\noindent
{\bf Example.}
Let
\begin{equation*}
P =
\begin{pmatrix}
0 & 1 & 0 & \sqrt{-1}r_1^3 & r_1\\
0 & 0 & 1 & r_1 & \sqrt{-1} \\
1 & 0 & 0 & 0 & r_2 \\
\end{pmatrix}
,
\end{equation*}
where $r_1$ is a positive number such that $1, r_1, r_1^2, r_1^3$ are
linearly independent over ${\mathbb Q}$, and $r_2 \in {\mathbb R}
\setminus {\mathbb Q}$. We denote by $\Gamma $ a discrete subgroup of
${\mathbb C}^3$ generated by column vectors of $P$. Then
$X = {\mathbb C}^3/\Gamma $ is a toroidal group, for
$r_2 \in {\mathbb R} \setminus {\mathbb Q}$. It is obvious that $X$
is a quasi-abelian variety of kind 0. \par
Let ${\mathbb A} = {\mathbb C}^2/\Lambda _0$ be an abelian variety
with the following period matrix
\begin{equation*}
\begin{pmatrix}
1 & 0 & \sqrt{-1}r_1^3 & r_1\\
0 & 1 & r_1 & \sqrt{-1}\\
\end{pmatrix}
.
\end{equation*}
We can represent $X$ as a principal ${\mathbb C}^{*}$-bundle
$\rho : X \longrightarrow {\mathbb A}$ over ${\mathbb A}$.
We denote by ${\mathbb R}_{\Gamma }^{5}$ the real linear subspace
spanned by $\Gamma $. Then ${\mathbb C}_{\Gamma }^{2} :=
{\mathbb R}_{\Gamma }^{5} \cap \sqrt{-1} {\mathbb R}_{\Gamma }^{5}$
is the maximal complex linear subspace contained in ${\mathbb R}_{\Gamma }^{5}$.
Let $(z_1,z_2,z_3)$ be toroidal coordinates of ${\mathbb C}^3$.
The principal ${\mathbb C}^{*}$-bundle
$\rho : X \longrightarrow {\mathbb A}$ is given by a projection
$\mu : {\mathbb C}^3 \longrightarrow {\mathbb C}_{\Gamma }^{2}, \, 
(z_1,z_2,z_3) \longmapsto (z_1,z_2)$.
Assume that $X$ contains a 1-dimensional toroidal subgroup $Y_0$.
Since a 1-dimensional toroidal group is a complex torus, 
$Y_0$ is a complex torus. Then $Y_0$ is contained in the maximal
compact subgroup ${\mathbb R}_{\Gamma }^{5}/\Gamma $.
Therefore, the connected component of $\pi ^{-1}(Y_0)$ containing $0$
is in ${\mathbb C}_{\Gamma }^{2}$, where 
$\pi : {\mathbb C}^3 \longrightarrow X$ is the projection.
Hence, $\rho (Y_0) = \rho (\pi (\pi ^{-1}(Y_0)))$ is a
1-dimensional subtorus of ${\mathbb A}$. However, ${\mathbb A}$ is
simple by Lemma 2. This is a contradiction. Therefore,
$X$ does not contain a 1-dimensional toroidal subgroup.\par
Next we consider 2-dimensional toroidal subgroups. We assume that
there exist a toroidal group ${\mathbb C}^2/\Lambda $ and a holomorphic
immersion $\varphi : {\mathbb C}^2/\Lambda \longrightarrow X$ which
is a homomorphism. Let $\Phi : {\mathbb C}^2 \longrightarrow {\mathbb C}^3$
be the linear extension of $\varphi $. We write
${\mathbb C}^3 = {\mathbb C}_{\Gamma }^2 \oplus V \oplus \sqrt{-1}V$
and ${\mathbb R}_{\Gamma }^5 = {\mathbb C}_{\Gamma }^2 \oplus V$ as usually,
where $V$ is a real linear subspace.
We set $W := \Phi ({\mathbb C}^2) \cap {\mathbb C}_{\Gamma }^2$.
When $\dim _{{\mathbb C}}W = 1$, 
$\Phi ({\mathbb C}^2) = W \oplus V \oplus \sqrt{-1}V$. We have
$\Phi (\Lambda ) \subset \Phi ({\mathbb C}^2 )\cap \Gamma = (W \oplus V)
\cap \Gamma $. Let $\Gamma _0 := {\mathbb Z} {\bf e}_3$, where
${\bf e}_3 = {}^t(0,0,1)$. Then there exists a discrete subgroup
$\Gamma _1$ such that $\Phi ({\mathbb C}^2) \cap \Gamma = \Gamma _0
\oplus \Gamma _1$. If $\Gamma _1 \not= \{ 0 \}$, then there exists
$\gamma ^{(0)} \in \Gamma _1$ such that
\begin{equation*}
\| \gamma ^{(0)} \| = \min \{ \| \gamma \| ; \gamma \in \Gamma _1,
\gamma \not= 0 \}.
\end{equation*}
We note $\Phi ({\mathbb C}^2) = {\mathbb C}{\bf e}_3 \oplus {\mathbb C}
\gamma ^{(0)}$. Using the same argument as in the proof of Lemma 2
for $L = {\mathbb C}\gamma ^{(0)}$ and $\Gamma _1$, we see
${\rm rank}(\Phi ({\mathbb C}^2) \cap \Gamma ) = 2$. 
Then $\varphi ({\mathbb C}^2/\Lambda ) = \Phi ({\mathbb C}^2)/(
\Phi ({\mathbb C}^2) \cap \Gamma )$ is not a toroidal group.\par
If $\dim _{{\mathbb C}}W = 2$, then
$\Phi ({\mathbb C}^2) = {\mathbb C}_{\Gamma }^2$. In this case,
${\mathbb C}_{\Gamma }^2 \cap \Gamma $ has a period matrix
\begin{equation*}
P'=
\begin{pmatrix}
1 & 0 & \sqrt{-1}r_1^3\\
0 & 1 & r_1\\
\end{pmatrix}.
\end{equation*}
Since $r_1 \notin {\mathbb Q}$, $Y := \varphi ({\mathbb C}^2/\Lambda ) =
{\mathbb C}_{\Gamma }^2/({\mathbb C}_{\Gamma }^2 \cap \Gamma )$ is a
toroidal subgroup of $X$. We have $\pi ^{-1}(Y) = {\mathbb C}_{\Gamma }^2 + \Gamma $.
By the density condition of toroidal groups, the closure
$\overline{{\mathbb C}_{\Gamma }^2 + \Gamma }$ of ${\mathbb C}_{\Gamma }^2 + \Gamma $
is equal to ${\mathbb R}_{\Gamma }^5$. Therefore, $X$ is the smallest closed
toroidal subgroup which contains $Y$. Hence, $X$ is geometrically simple,
but contains a 2-dimensional toroidal subgroup $Y$.

\section{Decomposition}
The following theorem is a generalization of Proposition 4.8
in \cite{takayama}.

\begin{theorem}
Let $X$ be an $n$-dimensional quasi-abelian variety. If $X$ contains a
closed quasi-abelian subvariety $X_1$, then there exists a closed
quasi-abelian subvariety $X_2$ of $X$ such that the natural
homomorphism $\varphi : X_1 \times X_2 \longrightarrow X$ is an
isogeny.
\end{theorem}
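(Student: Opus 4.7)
The natural strategy is Poincar\'e reducibility: use an ample Riemann form on $X$ to construct $X_2$ as the orthogonal complement of $X_1$. Write $X = {\mathbb C}^n/\Gamma$ and $\pi^{-1}(X_1) = E_1 + \Gamma$, where $E_1$ is a complex linear subspace and $\Gamma_1 := E_1 \cap \Gamma$. By hypothesis $E_1 + \Gamma$ is closed, and $X_1 = E_1/\Gamma_1$ is itself quasi-abelian, so carries its own ample Riemann form.

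Fix an ample Riemann form $H$ on $X$, i.e.\ a non-degenerate Hermitian form on ${\mathbb C}^n$ whose imaginary part $A = \mathrm{Im}\, H$ is ${\mathbb Z}$-valued on $\Gamma \times \Gamma$ and satisfies the ampleness positivity condition on the maximal complex subspace of ${\mathbb R}_{\Gamma}$. I would first check that $H|_{E_1}$ is a Riemann form for $X_1$, in particular non-degenerate on $E_1$. Define $E_2 := E_1^{\perp}$, the $H$-orthogonal complement of $E_1$; then ${\mathbb C}^n = E_1 \oplus E_2$. Set $X_2 := (E_2 + \Gamma)/\Gamma$. Granted this is a closed quasi-abelian subvariety, the natural homomorphism $\varphi : X_1 \times X_2 \longrightarrow X$, $(x_1, x_2) \longmapsto x_1 + x_2$, is surjective because $E_1 + E_2 = {\mathbb C}^n$, and has finite kernel because $E_1 \cap E_2 = \{0\}$ forces the kernel to be a discrete subgroup of the compact part of $X$, hence finite.

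The main obstacle will be showing that $E_2 + \Gamma$ is closed in ${\mathbb C}^n$ and that $X_2$ inherits the structure of a quasi-abelian variety, with $H|_{E_2}$ as its polarization. For an abelian variety this step is automatic, essentially because the lattice is full and one may apply the Rosati involution to the idempotent projecting onto $X_1$. In the quasi-abelian setting, however, $\Gamma$ is no longer full in ${\mathbb C}^n$ and $H$ need not be positive definite everywhere, so a naive rationality argument is unavailable. I would instead exploit the ${\mathbb Z}$-valuedness of $A$ on $\Gamma \times \Gamma$ to transfer the rational structure from $E_1$ to $E_2$, by choosing a basis of $\Gamma \otimes {\mathbb Q}$ adapted to the $A$-orthogonal splitting, and then combine the closedness of $E_1 + \Gamma$ with the density condition of toroidal groups to conclude both that $E_2 + \Gamma$ is closed and that $H|_{E_2}$ retains the required positivity for $X_2$ to be quasi-abelian.
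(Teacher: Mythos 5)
Your overall strategy (Poincar\'e reducibility via the Riemann form) is the right family of ideas and is what the paper does, but the specific construction you propose breaks down, and the step you defer as ``the main obstacle'' is in fact the entire content of the theorem. Two concrete problems. First, an ample Riemann form on a quasi-abelian variety is \emph{not} required to be non-degenerate on ${\mathbb C}^n$: it is a Hermitian form whose imaginary part is integral on $\Gamma\times\Gamma$ and which is positive definite only on the maximal complex subspace ${\mathbb C}_{\Gamma}^{m}$ of ${\mathbb R}_{\Gamma}^{n+m}$. In particular $H|_{E_1}$ can be degenerate (the paper's proof only uses positivity on ${\mathbb C}_{\Gamma_1}^{m_1}$ and explicitly allows the rank $r_1$ of the restricted alternating form to be as small as $2m_1 < 2n_1$), so $E_2:=E_1^{\perp}$ need not be a complement of $E_1$ at all. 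Second, even where $E_1^{\perp}$ is a genuine complex complement, the kernel of $\varphi$ is isomorphic to $\Gamma/(\Gamma_1+\Gamma_2)$, which is finite if and only if ${\rm rank}(\Gamma_1+\Gamma_2)={\rm rank}\,\Gamma$, i.e.\ if and only if $E_2$ meets $\Gamma$ in a subgroup of rank $\ell=(n+m)-(n_1+m_1)$. Your appeal to ``a discrete subgroup of the compact part, hence finite'' assumes exactly this; a Hermitian orthogonal complement has no reason to contain that many lattice points, and a discrete subgroup of $X_1\times X_2$ of positive rank is infinite.

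The paper resolves precisely this point by reversing the order of construction: rather than forming an orthogonal complement of the subspace $E_1$ and then hunting for lattice vectors in it, it forms the \emph{integer} matrix $A_1=({\mathcal A}(\gamma_i,\gamma_j))$ with $i$ running over a basis of $\Gamma_1$ and $j$ over a basis of $\Gamma$, notes ${\rm rank}\,A_1\leqq k_1$, and chooses $\ell$ integral solutions of $A_1{\bf x}={\bf 0}$ to produce a rank-$\ell$ subgroup $\Lambda\subset\Gamma$ with ${\mathbb R}_{\Lambda}^{\ell}\oplus{\mathbb R}_{\Gamma_1}^{k_1}={\mathbb R}_{\Gamma}^{n+m}$; the integrality of ${\mathcal A}$ on $\Gamma$ is what guarantees enough integral solutions exist. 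The complex subspace $V_2$ is then assembled around ${\mathbb R}_{\Lambda}^{\ell}$ by explicit real-linear decompositions ($V_2=E_0\oplus(W_1\cap E)\oplus\sqrt{-1}(W_1\cap E)\oplus F\oplus\sqrt{-1}F$); it is not the $H$-orthogonal complement of $V_1$, and closedness of $X_2$ follows because ${\mathbb R}_{\Lambda}^{\ell}+\Gamma={\mathbb R}_{\Lambda}^{\ell}\oplus\Gamma_1$ is already closed. To repair your argument you would need to replace the Hermitian orthogonal complement of $E_1$ by the ${\mathcal A}$-orthogonal complement of $\Gamma_1$ computed inside $\Gamma$ (not inside ${\mathbb C}^n$), and then carry out the real-to-complex bookkeeping that the paper does.
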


\begin{proof}
We may write $X = V/\Gamma $, where $V$ is an $n$-dimensional complex
linear space and $\Gamma $ is a discrete subgroup of $V$.
Let ${\rm rank}\; \Gamma = n+m$. We have a real linear subspace $W$
of dimension $n-m$ such that
\begin{equation*}
{\mathbb R}_{\Gamma }^{n+m} = {\mathbb C}_{\Gamma }^m \oplus W\quad
\text{and}\quad V = {\mathbb C}_{\Gamma }^m \oplus W \oplus \sqrt{-1}W.
\end{equation*}
There exists a complex linear subspace $V_1$ of $V$ such that
$X_1 = V_1/\Gamma _1$, where $\Gamma _1 := V_1 \cap \Gamma $. Let
$\dim _{{\mathbb C}}V_1 = n_1$ and ${\rm rank}\; \Gamma _1 = n_1 + m_1$.
We can take a real linear subspace $W_1$ of dimension $n_1 - m_1$ such that
\begin{equation*}
{\mathbb R}_{\Gamma _1}^{n_1+m_1} = {\mathbb C}_{\Gamma _1}^{m_1} \oplus W_1\quad
\text{and}\quad V_1 = {\mathbb C}_{\Gamma _1}^{m_1} \oplus W_1 \oplus \sqrt{-1}W_1.
\end{equation*}
We note that ${\mathbb C}_{\Gamma _1}^{m_1}$ is a complex linear
subspace of ${\mathbb C}_{\Gamma }^m$ and
${\mathbb C}_{\Gamma _1}^{m_1} \oplus W_1 \subset {\mathbb R}_{\Gamma }^{n+m}$.
\par
We set $k_1 := n_1 + m_1$. We take generators $\gamma _1, \dots , \gamma _{n+m}$
of $\Gamma $ such that $\gamma _1, \dots ,\gamma _{k_1}$ are generators of
$\Gamma _1$.
Since $X$ is a quasi-abelian variety, there exists an ample Riemann
form ${\mathcal H}$ for $X$. We denote by ${\mathcal A}$ the imaginary
part of ${\mathcal H}$. We set
\begin{equation*}
A_1 := 
\begin{pmatrix}
{\mathcal A}(\gamma _1, \gamma _1) & \cdots &{\mathcal A}(\gamma _1, \gamma _{k_1})
& {\mathcal A}(\gamma _1, \gamma _{k_1 +1}) & \cdots &
{\mathcal A}(\gamma _1, \gamma _{n+m})\\
\vdots &    & \vdots & \vdots &      & \vdots  \\
{\mathcal A}(\gamma _{k_1}, \gamma _1) & \cdots &{\mathcal A}(\gamma _{k_1}, \gamma _{k_1})
& {\mathcal A}(\gamma _{k_1}, \gamma _{k_1 +1}) & \cdots &
{\mathcal A}(\gamma _{k_1}, \gamma _{n+m})\\
\end{pmatrix}.
\end{equation*}
Then $A_1$ is a $(k_1, n+m)$-matrix with integral entries. 
Let $r_1 := {\rm rank}\; A_1$.
Since ${\mathcal H}$ is positive definite on ${\mathbb C}_{\Gamma _1}^
{m_1}$ and ${\mathcal A}$ is an alternating form, we have
$r_1 = 2(m_1 + k)$ with $0 \leqq 2k \leqq n_1 - m_1$. We consider
an equation $A_1 {\bf x} = {\bf 0}$ $({\bf x} \in {\mathbb R}^{n+m})$.
Let $S_1$ be the space of solutions of this equation. Then we have
$\dim _{{\mathbb R}}S_1 = n + m -r_1$. Since $r_1 \leqq k_1$, we can
take ${\bf x}^{(1)}, \dots , {\bf x}^{(n+m-k_1)} \in S_1 \cap 
{\mathbb Z}^{n+m}$ which are linearly independent over ${\mathbb R}$
such that $\det \left( x^{(j)}_{k_1 + i}\right) _{i,j = 1, \dots ,
n+m-k_1} \not= 0$, where
${\bf x}^{(j)} = {}^t (x_1^{(j)}, \dots , x_{k_1}^{(j)},
x_{k_1 + 1}^{(j)}, \dots , x_{n+m}^{(j)})$. We set
\begin{equation*}
\lambda ^{(j)} := \sum _{i=1}^{n+m} x_i^{(j)}\gamma _i \in
\Gamma \setminus \Gamma _1
\end{equation*}
for $j=1, \dots , n+m-k_1$. Then $\lambda ^{(1)},\dots ,
\lambda ^{(n+m-k_1)}$ are linearly independent over ${\mathbb R}$.
We define a subgroup $\Lambda $ of $\Gamma $ with
${\rm rank}\; \Lambda = n+m-k_1$ by
\begin{equation*}
\Lambda := \bigoplus _{j=1}^{n+m-k_1} {\mathbb Z}\lambda ^{(j)}.
\end{equation*}
Let $\ell := n + m - k_1$. Since $\gamma _1, \dots , \gamma _{k_1},
\lambda ^{(1)}, \dots , \lambda ^{(n+m-k_1)}$
are linearly independent over ${\mathbb R}$, we have
${\mathbb R}_{\Lambda }^{\ell } \cap {\mathbb R}_{\Gamma _1}^{k_1}
= \{ 0 \}$. Then we obtain ${\mathbb R}_{\Gamma }^{n+m} =
{\mathbb R}_{\Lambda }^{\ell } \oplus {\mathbb R}_{\Gamma _1}^{k_1}$.
Therefore we have
\begin{equation*}
\overline{{\mathbb R}_{\Lambda }^{\ell } + \Gamma } =
\overline{{\mathbb R}_{\Lambda }^{\ell } \oplus \Gamma _1} =
{\mathbb R}_{\Lambda }^{\ell } \oplus \Gamma _1 =
{\mathbb R}_{\Lambda }^{\ell } + \Gamma .
\end{equation*}
There exists a complex linear subspace $E$ such that
${\mathbb C}_{\Gamma }^m = {\mathbb C}_{\Gamma _1}^{m_1} \oplus E$.
Then we have
\begin{equation*}
W_1 = (W_1 \cap E) \oplus (W_1 \cap W).
\end{equation*}
Furthermore, there exist a real linear subspace $F$ and a complex
linear subspace $E_0$ such that
$W = (W_1 \cap W) \oplus F$ and
\begin{equation*}
E = E_0 \oplus (W_1 \cap E) \oplus \sqrt{-1}(W_1 \cap E).
\end{equation*}
We have ${\mathbb R}_{\Gamma _1}^{k_1} = {\mathbb C}_{\Gamma _1}^{m_1}
\oplus W_1 $ and
\begin{equation*}
{\mathbb R}_{\Gamma }^{n+m} = {\mathbb C}_{\Gamma _1}^{m_1}
\oplus E_0 \oplus (W_1 \cap E) \oplus \sqrt{-1}(W_1 \cap E)
\oplus (W_1 \cap W) \oplus F.
\end{equation*}
On the other hand, we have
\begin{equation*}
\begin{split}
{\mathbb R}_{\Gamma }^{n+m} & =
{\mathbb R}_{\Lambda }^{\ell } \oplus {\mathbb R}_{\Gamma _1}^{k_1}\\
& = {\mathbb R}_{\Lambda }^{\ell } \oplus {\mathbb C}_{\Gamma _1}^{m_1}
\oplus (W_1 \cap E) \oplus (W_1 \cap W).
\end{split}
\end{equation*}
Then we obtain
\begin{equation*}
{\mathbb R}_{\Lambda }^{\ell } = E_0 \oplus \sqrt{-1}(W_1 \cap E) 
\oplus F.
\end{equation*}
If we define
\begin{equation*}
V_2 := E_0 \oplus (W_1 \cap E) \oplus \sqrt{-1}(W_1 \cap E)
\oplus F \oplus \sqrt{-1}F,
\end{equation*}
then $V_2$ is a complex linear subspace of $V$. We note that
$\Lambda $ is a discrete subgroup of $V_2$ with $\Lambda \subset \Gamma $.
We set $\Gamma _2 := V_2 \cap \Gamma $ and $X_2 := V_2/\Gamma _2$.
Since $\Lambda \subset \Gamma _2$, we have 
$\ell = {\rm rank}\; \Lambda \leqq {\rm rank}\; \Gamma _2$.
From $\Gamma _1 + \Lambda \subset \Gamma _1 + \Gamma _2 \subset \Gamma $
and ${\rm rank}(\Gamma _1 + \Lambda ) = n + m$ it follows that
\begin{equation*}
{\rm rank}(\Gamma _1 + \Lambda ) = {\rm rank}(\Gamma _1 + \Gamma _2)
= {\rm rank}\; \Gamma = n + m.
\end{equation*}
Then we obtain 
${\rm rank}\; \Gamma _2 = {\rm rank}\; \Lambda = \ell $
and ${\mathbb R}_{\Lambda }^{\ell } = {\mathbb R}_{\Gamma _2}^{\ell }$.
Therefore we have
\begin{equation*}
\overline{{\mathbb R}_{\Gamma _2}^{\ell } + \Gamma } =
\overline{{\mathbb R}_{\Lambda }^{\ell } + \Gamma } =
{\mathbb R}_{\Lambda }^{\ell } + \Gamma =
{\mathbb R}_{\Gamma _2 }^{\ell } + \Gamma .
\end{equation*}
This means that $X_2$ is a closed complex Lie subgroup of $X$.
Since $V_1 \oplus V_2 = V$, the natural homomorphism
$\varphi : X_1 \times X_2 \longrightarrow X$ is surjective.
Then the linear extension $\Phi : V_1 \oplus V_2 \longrightarrow V$
of $\varphi $ is bijective. Therefore $\varphi $ is an isogeny, for
$\Gamma /(\Gamma _1 + \Gamma _2)$ is a finite group.
Hence $X_2$ is a closed quasi-abelian subvariety of $X$.
\end{proof}

The following corollary is immediate from Theorem 3.

\begin{corollary}
Let $X$ be a quasi-abelian variety. Then, there exist a finite number
of geometrically simple quasi-abelian subvarieties
$X_1, \dots , X_k$ of $X$ such that $X$ and 
$X_1 \times \cdots \times X_k$ are isogenous.
\end{corollary}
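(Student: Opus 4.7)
The plan is to proceed by induction on $n = \dim_{\mathbb{C}} X$, using Theorem 3 as the splitting engine. The base case $n = 1$ is trivial since a $1$-dimensional quasi-abelian variety has no nonzero proper closed toroidal subgroup (any such would be $0$-dimensional, hence $0$), so it is already geometrically simple.

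For the inductive step, suppose the statement holds for all quasi-abelian varieties of dimension less than $n$, and let $X$ have dimension $n$. If $X$ is already geometrically simple, take $k=1$ and $X_1 = X$. Otherwise, by Definition 1, $X$ contains a closed toroidal subgroup $X_1$ with $0 \neq X_1 \neq X$. The first thing to check is that $X_1$ is in fact a quasi-abelian subvariety: one restricts the ample Riemann form $\mathcal{H}$ of $X$ to the complex linear subspace $V_1 \subset V$ with $X_1 = V_1/(V_1 \cap \Gamma)$, and observes that the positivity of $\mathcal{H}$ on the maximal complex subspace of $\mathbb{R}_\Gamma^{n+m}$ restricts to positivity on the maximal complex subspace of $\mathbb{R}_{\Gamma_1}^{k_1}$, so the restricted form is ample on $X_1$. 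Then Theorem 3 produces a closed quasi-abelian subvariety $X_2$ with an isogeny $\varphi : X_1 \times X_2 \longrightarrow X$.

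Since $\varphi$ is an isogeny and $0 \neq X_1 \neq X$, both $X_1$ and $X_2$ have complex dimension strictly less than $n$. By the induction hypothesis, each factor is isogenous to a product of geometrically simple quasi-abelian subvarieties: say $X_1$ is isogenous to $Y_1 \times \cdots \times Y_p$ and $X_2$ is isogenous to $Z_1 \times \cdots \times Z_q$. Composing these isogenies with $\varphi$ (and using the elementary fact that the product of two isogenies is again an isogeny, and that isogeny is a transitive relation) yields an isogeny
\begin{equation*}
Y_1 \times \cdots \times Y_p \times Z_1 \times \cdots \times Z_q \longrightarrow X,
\end{equation*}
which gives the desired decomposition with $k = p + q$.

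The only step requiring genuine care is the verification that the closed toroidal subgroup $X_1$ (and then $X_2$, produced by Theorem 3) actually inherits the structure of a quasi-abelian variety, i.e.\ admits an ample Riemann form; the rest of the argument is a straightforward induction, since Theorem 3 already does all the structural work of producing the complementary factor. An additional minor point worth noting is that the subvarieties $X_1, \dots, X_k$ in the statement need only be quasi-abelian subvarieties of $X$ up to isogeny, which is exactly what the construction delivers.
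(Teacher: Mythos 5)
Your proof is correct and is exactly the argument the paper intends: the paper simply declares the corollary ``immediate from Theorem 3,'' and your induction on $\dim_{\mathbb C}X$ (splitting off a proper nonzero closed toroidal subgroup, checking it is quasi-abelian by restricting the ample Riemann form, and applying Theorem 3) is the standard way to make that precise. The extra care you take in verifying that a closed toroidal subgroup of a quasi-abelian variety inherits an ample Riemann form, and that both factors have strictly smaller dimension, fills in the details the paper omits.
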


\noindent
{\bf Remark.}
Let ${\mathbb A}$ be an abelian variety. Then ${\mathbb A}$ is
isogenous to a product ${\mathbb A}_1\times \cdots \times
{\mathbb A}_N$ of simple abelian varieties ${\mathbb A}_1, \dots 
, {\mathbb A}_N$. Furthermore,
this decomposition ${\mathbb A}_1\times \cdots \times
{\mathbb A}_N$ is unique up to isogeny. 
A proof of the uniqueness is based on the following fact.
\\
Let ${\mathbb A}$ and ${\mathbb A}'$ be simple abelian varieties.
If ${\mathbb A}$ and ${\mathbb A}'$ are not isogenous, then
${\rm Hom}({\mathbb A},{\mathbb A}') = \{ 0 \}$.\par
Unfortunately, it does not hold in the case of quasi-abelian varieties.
The example in Section 2 gives a counterexample.
Let $X_1$ and $X_2$ be the 2-dimensional toroidal subgroup $Y$ and
the 3-dimensional quasi-abelian variety $X$ in the example in
Section 2 respectively. Since $Y$ is non-compact, it is a simple 
quasi-abelian variety, hence geometrically simple. We have a non-zero 
homomorphism $\varphi : X_1 \longrightarrow X_2$ as shown in Section 2.
Then ${\rm Hom}(X_1,X_2) \not= \{ 0 \} $.\par
Therefore, we cannot apply the above proof of the uniqueness for
abelian varieties to quasi-abelian varieties.
We do not know whether the uniqueness holds in
the case of quasi-abelian varieties.

\section{Endomorphisms}
We extend Lemma 3.3 in \cite{abe} to geometrically simple toroidal groups
in this section.

\begin{proposition}
If $X = {\mathbb C}^n/\Gamma $ is a geometrically simple toroidal
group, then ${\rm End}_{{\mathbb Q}}(X)$ is a division algebra
over ${\mathbb Q}$.
\end{proposition}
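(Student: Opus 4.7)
The plan is to show that every nonzero $f \in {\rm End}(X)$ is an isogeny. Once this is established, the usual argument produces an inverse in ${\rm End}_{{\mathbb Q}}(X)$: setting $N := |\ker f|$, the multiplication-by-$N$ map on $X$ annihilates $\ker f$, so $N \cdot {\rm id}_X$ factors as $g \circ f$ for some $g \in {\rm End}(X)$, and then $f^{-1} = \frac{1}{N} g$ lies in ${\rm End}_{{\mathbb Q}}(X)$. Hence every nonzero element of ${\rm End}_{{\mathbb Q}}(X)$ is invertible.

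Fix $f \neq 0$ and lift it to a complex-linear $F : {\mathbb C}^n \to {\mathbb C}^n$ with $F(\Gamma) \subseteq \Gamma$. Let $K$ be the identity component of $\ker f$; it is the analytic subgroup $\pi(\ker F)$, possibly non-closed in $X$. Its closure $\overline{K}$ is a closed connected complex Lie subgroup of $X$, so $\pi^{-1}(\overline{K}) = E + \Gamma$ where $E \supseteq \ker F$ is the smallest complex linear subspace with $E + \Gamma$ closed in ${\mathbb C}^n$, and $\overline{K} = E/(E \cap \Gamma)$. The key claim is that $\overline{K}$ is itself a toroidal group, hence a closed toroidal subgroup of $X$. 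Granted this, geometric simpleness forces $\overline{K} \in \{0, X\}$; since $\overline{K} \subseteq \ker f \neq X$, we conclude $\overline{K} = 0$. Therefore $\ker F = 0$ and $F$ is a linear bijection of ${\mathbb C}^n$, so $F(\Gamma)$ is a subgroup of $\Gamma$ of the same rank $n+m$, giving $[\Gamma : F(\Gamma)] < \infty$. Consequently $\ker f \cong F^{-1}(\Gamma)/\Gamma$ is finite and $f$ is surjective, i.e., $f$ is an isogeny.

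The main obstacle is the key claim that $\overline{K}$ is a toroidal group. A closed connected complex Lie subgroup of a toroidal group can, a priori, split off a ${\mathbb C}^a$ vector factor and thus fail the toroidal density condition; one cannot simply invoke closedness. What saves the argument is the construction of $\overline{K}$ as the closure of the analytic image $\pi(\ker F)$: the minimality of $E$ with $E + \Gamma$ closed, combined with the ambient density condition for $\Gamma$ in ${\mathbb C}^n$, should force ${\rm rank}(E \cap \Gamma) \geq \dim_{{\mathbb C}} E$ together with the correct density of $E \cap \Gamma$ inside $E$, which are the toroidal-subgroup conditions. Verifying this density inheritance carefully, in the spirit of Lemma~3.3 of~\cite{abe}, is the technical heart of the argument.
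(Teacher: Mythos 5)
Your overall strategy coincides with the paper's: reduce the division-algebra statement to showing that every nonzero $f\in{\rm End}(X)$ is an isogeny, and obtain that by proving $\ker F=\{0\}$ via geometric simpleness. The problem is that the step you yourself label ``the technical heart'' --- that the closed connected complex Lie subgroup manufactured from $\ker F$ is a \emph{toroidal} group --- is exactly where all the content of the proposition lies, and your proposal offers no proof of it, only the hope that minimality of $E$ together with the density condition for $\Gamma$ ``should force'' the toroidal conditions. That mechanism cannot work as stated: geometric simpleness only forbids closed \emph{toroidal} subgroups, and a geometrically simple quasi-abelian variety can contain closed connected complex Lie subgroups that are not toroidal. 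The paper's own example in Section 2 exhibits this: there $X$ is geometrically simple and is a principal ${\mathbb C}^{*}$-bundle over an abelian surface, and the fibre through $0$, namely ${\mathbb C}{\bf e}_3/{\mathbb Z}{\bf e}_3\cong{\mathbb C}^{*}$, is a closed connected complex subgroup with $E={\mathbb C}{\bf e}_3$, $E\cap\Gamma={\mathbb Z}{\bf e}_3$ and $E+\Gamma$ closed. So closedness plus geometric simpleness alone yields no contradiction; one must use that $K$ is the kernel of an endomorphism, and your argument never does.

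The paper supplies the missing idea by a different device: it sets $W:=\Phi({\mathbb C}^n)$, uses $\Phi(\Gamma)\subset\Gamma$ to get ${\mathbb R}_{\Gamma}^{n+m}=(K\cap{\mathbb R}_{\Gamma}^{n+m})\oplus(W\cap{\mathbb R}_{\Gamma}^{n+m})$ and ${\rm rank}(K\cap\Gamma+\Phi(\Gamma))={\rm rank}\,\Gamma$ with $(K\cap\Gamma)\cap\Phi(\Gamma)=\{0\}$, so that $K/(K\cap\Gamma)\oplus W/\Phi(\Gamma)$ is isogenous to $X$. Toroidality of the factor $K/(K\cap\Gamma)$ then comes for free: a group isogenous to a toroidal group admits no nonconstant holomorphic functions, hence neither does either factor, so both factors are toroidal; pulling the first factor back through the isogeny $\nu:X\to K/(K\cap\Gamma)\oplus W/\Phi(\Gamma)$ produces a closed toroidal subgroup of $X$, contradicting geometric simpleness. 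This product-decomposition-up-to-isogeny is the ingredient your proposal lacks, and without it (or some substitute that distinguishes $\ker F$ from a subgroup like the ${\mathbb C}^{*}$ fibre above) the proof does not close. Two smaller remarks: the topological closure of $\pi(\ker F)$ need not be a \emph{complex} Lie subgroup (the identity component of $\overline{\ker F+\Gamma}$ is a real subspace that may fail to be complex), so you should either work with the smallest closed complex subgroup containing it or observe that $\ker F+\Gamma$ is already closed here because $F^{-1}(\Gamma)$ is; and your reduction to ``every nonzero endomorphism is an isogeny'' is correct and matches the paper's appeal to its Lemma 3.3 argument.
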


\begin{proof}
Take any $\varphi \in {\rm End}(X)$ with $\varphi \not= 0$.
Let $\Phi : {\mathbb C}^n \longrightarrow {\mathbb C}^n$ be
the linear extension of $\varphi $. It suffices to show that
$\Phi $ is injective. We denote by $K$ the kernel of $\Phi$.
Assume $K \not= \{ 0\}$. Since $\Phi (\Gamma ) \subset \Gamma $,
$\Phi |_{{\mathbb R}_{\Gamma }^{n+m}} : {\mathbb R}_{\Gamma }^{n+m}
\longrightarrow {\mathbb R}_{\Gamma }^{n+m}$ is a real linear mapping,
where ${\rm rank}\; \Gamma = n+m$. We set $W := \Phi ({\mathbb C}^n)$.
Then we have
\begin{equation*}
{\mathbb R}_{\Gamma }^{n+m} = (K \cap {\mathbb R}_{\Gamma }^{n+m})
\oplus (W \cap {\mathbb R}_{\Gamma }^{n+m}).
\end{equation*}
It is easy to see that ${\rm rank}(K \cap \Gamma + \Phi (\Gamma ))
= {\rm rank}\; \Gamma $ and $(K \cap \Gamma ) \cap \Phi (\Gamma ) =
\{ 0 \}$.
Then $\Gamma /(K\cap \Gamma + \Phi (\Gamma ))$ is a finite group.
Therefore, the natural homomorphism
$\mu : K/(K \cap \Gamma ) \oplus W/\Phi (\Gamma ) \longrightarrow X$
is an isogeny. Hence we obtain an isogeny
$\nu : X \longrightarrow K/(K \cap \Gamma ) \oplus W/\Phi (\Gamma )$.
Both $K/(K \cap \Gamma )$ and $W/\Phi (\Gamma )$ are toroidal groups.
Then $\nu ^{-1}(K/(K \cap \Gamma ))$ is a closed toroidal subgroup
of $X$. It contradicts the assumption. Hence $K = \{ 0 \}$.
This completes the proof.
\end{proof}

\vspace{0.5cm}
\noindent
{\it Acknowledgement.}
The author would like to thank the referees for their careful
reading of the manuscript and valuable comments.

\flushleft{
Graduate School of Science and Engineering for Research\\
University of Toyama\\
Toyama 930-8555, Japan\\}

\noindent
e-mail: abe@sci.u-toyama.ac.jp\\

\end{document}